\documentclass[journal, 10 pt, onecolumn,draftcls,a4paper]{ieeeconf}
\usepackage[latin1]{inputenc}
\usepackage{graphicx}
\usepackage{amsfonts}
\usepackage{amssymb}
\usepackage{amsmath}
\usepackage{psfrag}
\usepackage{color}
\usepackage{balance}

\IEEEoverridecommandlockouts                              
\overrideIEEEmargins

\parindent 0mm

\newtheorem{lemma}{Lemma}
\newtheorem{theorem}{Theorem}

\newtheorem{proposition}{Proposition}

\newtheorem{definition}{Definition}

\def\rr{\mathbb{R}}
\def\sr{\tilde{r}}

\def\nn{\mathbb{N}}
\def\cM{\mathcal{M}}
\def\cR{\mathcal{R}}

\newcommand{\ds}{\displaystyle}
\newcommand{\vect}[2]{\begin{bmatrix}#1\\#2\end{bmatrix}}

\setlength{\jot}{7pt}

\graphicspath{{./pics/}}

\title{\LARGE \bf An improved lion strategy for the lion and man problem}
\author{Marco Casini, Andrea Garulli
\thanks{M.~Casini and A.~Garulli are with the Dipartimento di
Ingegneria dell'Informazione e Scienze Matematiche, Universit\`a degli Studi di Siena, via Roma~56, 53100 Siena, Italy. E-mail:
casini@dii.unisi.it,~garulli@dii.unisi.it.}}

\begin{document}
\maketitle \thispagestyle{empty} \pagestyle{empty}

\begin{abstract}
In this paper, a novel lion strategy for David Gale's lion and man problem is proposed. The devised approach enhances a popular strategy proposed by
Sgall, which relies on the computation of a suitable ``center''. The key idea of the new strategy is to update the center at each move, instead of
computing it once and for all at the beginning of the game. Convergence of the proposed lion strategy is proven and an upper bound on the game length is
derived, which dominates the existing bounds.
\end{abstract}

\begin{keywords}
Lion and man problems, pursuit-evasion games, combinatorial games
\end{keywords}

\section{Introduction}\label{sec:intro}

Pursuit-evasion games have attracted the interest of researchers for long time, both for the intriguing mathematics they require (see \cite{nahin2012}
for a nice introduction), and for the variety of applications they find in different contexts, ranging from mobile robotics to surveillance, resource
harvesting, network security and many others. When the game is played in a limited environment, problems become even more challenging. Among the huge
number of different formulations (an extensive survey is provided in \cite{chung2011}), two main classifications can be singled out, concerning
respectively time and space being continuous or discrete. If continuous time is assumed, it is well known that an evader can indefinitely escape a single
pursuer travelling at the same velocity, even in very simple continuous environments, like a circle \cite{littlewood1986}. On the other hand, if time is
discrete, the pursuer can capture the evader in finite time, in many situations of interest. This has generated a rich literature, considering different
assumptions on the number of pursuers, the structure of the environment and the information available to the players (see, e.g.,
\cite{isler2005,bopardikar2008,bhadauria2012,ames2015,aleem2015} and references therein).

A fundamental problem at the basis of the above literature is the so-called \emph{lion and man problem}, whose formulation is ascribed to Gale (see
problem 31 in \cite{guy1995}). A lion and a  man move alternately in the positive quadrant of the plane, travelling a distance of at most one unit at
each move. It is known that the lion can catch the man in finite time, provided that his initial coordinates are componentwise larger than those of the
man. Nevertheless, the optimal lion strategy is still an open problem. In \cite{sgall01}, Sgall has proposed a nice strategy for the lion, which
guarantees capture in finite time. Moreover, he has given an upper bound on the capture time which is achieved for some specific initial conditions.
Sgall's strategy is based on the definition of a fixed \emph{center}, depending on the initial lion and man positions: then, the lion always keeps on the
line connecting the center to the man's position, until capture occurs. This strategy has been used in several mobile robotics application, as reported
in the tutorial \cite{noori2016}. In particular, a slight variation of the solution proposed in \cite{sgall01} is adopted iteratively in
\cite{isler2005}, where it is instrumental to devise a strategy for two pursuers to capture an evader in simply connected polygonal environments. A
similar variation is employed in \cite{noori2014}, when dealing with pursuer evasion games in monotone polygons with line-of-sight visibility.

In this paper, a new lion strategy is proposed for the lion and man problem, which improves the one proposed in \cite{sgall01}. The main idea is to
compute a new center at each move, in order to enhance the advantage gained by the lion in a single step. This turns out to be effective also on the
whole, as it allows one to derive an upper bound on the maximum number of moves required to guarantee capture, which dominates the one given in
\cite{sgall01}.

The paper is organized as follows. In Section \ref{formulation}, the lion and man problem is formulated. The solution proposed in \cite{sgall01} is
reviewed in Section \ref{sec:FCLS}, along with some useful properties. The new strategy is introduced in Section \ref{sec:MCLS} and its convergence
properties are derived in Section \ref{sec:convergence}. Concluding remarks are reported in Section \ref{sec:conclusion}.

\section{Problem formulation}\label{formulation}
The notation adopted in the paper is standard. Let $\nn$ and $\rr^n_+$ denote the set of all natural numbers and the $n$-dimensional Euclidean space of
non-negative numbers, respectively. Let $\lceil x\rceil$ be the smallest integer greater or equal to $x$. A row vector with elements $v_1,\ldots,v_n$ is
denoted by $V=[v_1,\ldots,v_n]$, while $V'$ is the transpose of $V$.

In this paper, we consider the version of the \emph{lion and man problem} formulated by David Gale. Two players, a man and a lion, can move in the first
quadrant of the Cartesian plane. Time is assumed discrete, while space is continuous. At each round (hereafter called \emph{time}) both players are
allowed to move to any point inside the non-negative quadrant, with distance less or equal to a given radius $r$ from their current position. Hereafter,
it will be assumed $r=1$ without loss of generality. The man moves first, after that the lion moves. Let us denote by $M_t\in\rr^2_+$ and $L_t\in\rr^2_+$
the man and lion position at time $t$, respectively. Hence, $\|M_{t+1}-M_t\|\le1$, $\|L_{t+1}-L_t\|\le1$. The game ends (lion wins) if the lion moves
\emph{exactly} to the man position. If the man can escape indefinitely from the lion, the man wins. It is assumed that the initial man coordinates are
strictly smaller than the corresponding lion coordinates, otherwise it is straightforward to observe that the man wins the game by moving straight up or
right.

\section{Fixed Center Lion Strategy}\label{sec:FCLS}

Before introducing the proposed lion strategy, let us recall the one devised in \cite{sgall01}, hereafter referred to as \emph{Fixed Center Lion Strategy
(FCLS)}. If the initial man coordinates are strictly smaller than the corresponding lion coordinates, the FCLS allows the lion to capture the man in a
finite number of moves, for any man strategy. In order to state the FCLS, the following definition is needed.

\begin{definition}\label{def:center0}
Let $M_0$ and $L_0$ be the man and lion position at time $0$, respectively. Let $C_0=[x_0,\,y_0]'\in\rr^2_+$ be the point satisfying
\begin{enumerate}
  \item $C_0=L_0+\eta(L_0-M_0)$, with $\eta>0$\ ;
  \item $\|C_0-L_0\|=\max\{x_0,y_0\}\ .$
\end{enumerate}
Then $C_0$ is called the \emph{center} of the FCLS.
\end{definition}

\medskip
\emph{Fixed Center Lion Strategy}

Let $C_0$ be the center of the FCLS. Such a center is fixed and does not change during the game. At a given time $t$, let the man move from $M_t$ to
$M_{t+1}$. The lion adopts the following strategy:
\begin{itemize}
  \item if $\|M_{t+1}-L_t\|\le 1$, then the lion moves to $M_{t+1}$ and catches the man;
  \item otherwise, the lion moves to a point on the line connecting $M_{t+1}$ to $C_0$ with unitary distance from $L_t$. Between the two points satisfying
  such a condition, he chooses the one farther from $C_0$.
\end{itemize}

Let $C_0=[x_0,\,y_0]'$ and denote by $r_0$ and $m_0$ the greatest and smallest element of $C_0$, respectively, i.e., $r_0=\max\{x_0,y_0\}$ and
$m_0=\min\{x_0,y_0\}$. Let us denote by $N_{max}^{FCLS}$ the upper bound derived in \cite{sgall01} on the maximum number of moves needed by the lion to
catch the man by using FCLS. Let us recall some results proved in \cite{sgall01} which will be useful in the following.

\begin{proposition}\label{prop:sgall_r1_max}
Let the lion play the FCLS. At every $t$, one has
\begin{enumerate}
  \item[i)~]$\|L_t-C_0\|^2+1 \le \|L_{t+1}-C_0\|^2 \le \|C_0\|^2$\ ;
  \item[ii)~]both elements of $C_0-L_{t}$ are strictly positive.
\end{enumerate}
\end{proposition}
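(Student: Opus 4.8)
The plan is to prove both items simultaneously by induction on $t$, carrying a structural invariant that makes the geometry transparent: at every time $t$ the points $C_0$, $L_t$, $M_t$ are collinear with $L_t$ strictly between $C_0$ and $M_t$, so that $0<d_t<\rho_t$, where $d_t:=\|C_0-L_t\|$ and $\rho_t:=\|C_0-M_t\|$. The base case is immediate from Definition \ref{def:center0}: the relation $C_0=L_0+\eta(L_0-M_0)$ with $\eta>0$ places $M_0,L_0,C_0$ in this order on a line, and $\rho_0=d_0+\|L_0-M_0\|>d_0=r_0>0$. Collinearity is preserved since, by construction, $L_{t+1}$ lies on the line through $C_0$ and $M_{t+1}$, and the ordering is preserved once the non-capture condition $\|M_{t+1}-L_t\|>1$ is shown to force $\rho_{t+1}>d_{t+1}$ (the lion does not overshoot the man). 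Writing the lion as the convex combination $L_t=(1-d_t/\rho_t)\,C_0+(d_t/\rho_t)\,M_t$ then reduces the remaining claims to statements about the man: since $C_0-L_t=(d_t/\rho_t)(C_0-M_t)$ with $d_t/\rho_t\in(0,1)$, item ii) is \emph{equivalent} to $C_0-M_t\succ0$ componentwise; moreover $L_t$ is a convex combination of the first-quadrant points $C_0$ and $M_t$, so $L_t\succeq0$, and then item ii) together with $0\le L_t^{(j)}<C_0^{(j)}$ (writing $v^{(j)}$ for the $j$-th component of a vector $v$) gives $(C_0^{(j)}-L_t^{(j)})^2\le (C_0^{(j)})^2$, which summed over the two coordinates yields the upper bound $\|C_0-L_t\|^2\le\|C_0\|^2$ of item i).

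For the lower bound in item i) I would argue entirely in the plane spanned by the two rays. Let $\theta$ be the angle at $C_0$ between the old ray $C_0M_t$ and the new ray $C_0M_{t+1}$, and let $F$ be the foot of the perpendicular from $L_t$ onto the new ray. Since the lion takes the far intersection of the unit circle about $L_t$ with the new ray, a right-triangle computation gives $d_{t+1}=d_t\cos\theta+\sqrt{1-d_t^2\sin^2\theta}$, whence $d_{t+1}^2-d_t^2-1=2d_t\cos\theta\sqrt{1-d_t^2\sin^2\theta}-2d_t^2\sin^2\theta$; dividing by $2d_t>0$ shows that the lower bound is equivalent to $\cos\theta\sqrt{1-d_t^2\sin^2\theta}\ge d_t\sin^2\theta$. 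The decisive observation is that this is exactly the sign condition $g'(d_t)\ge0$ for the function $g(\rho)=\rho\cos\theta+\sqrt{1-\rho^2\sin^2\theta}$, which is unimodal (increasing then decreasing) on its domain. I would then supply the two inequalities that pin $d_t$ to the increasing branch: the move bound $\|M_{t+1}-M_t\|\le1$ forces $\rho_{t+1}\le g(\rho_t)$ (the farthest point of the new ray within unit distance of $M_t$, which is well defined since the perpendicular distance $\rho_t\sin\theta\le\|M_{t+1}-M_t\|\le1$), while the non-capture condition $\|M_{t+1}-L_t\|>1$ forces $\rho_{t+1}>g(d_t)=d_{t+1}$. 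Chaining these gives $g(d_t)<g(\rho_t)$ with $\rho_t>d_t$, and unimodality rules out $d_t$ lying on the decreasing branch, so $g'(d_t)\ge0$ as required; the same chain delivers $\rho_{t+1}>d_{t+1}$, closing the ordering part of the invariant.

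The step I expect to be the genuine obstacle is maintaining $C_0-M_{t+1}\succ0$, and hence item ii), because the man moves freely and could in principle steer a coordinate past that of $C_0$. The claim must therefore be read as: on each move the man either keeps both coordinates strictly below those of $C_0$, or he is captured. The plan is to establish the contrapositive geometrically, namely that if, say, $M_{t+1}^{(1)}\ge C_0^{(1)}$ while $M_t^{(1)}<C_0^{(1)}$, then $\|M_{t+1}-L_t\|\le1$. The leverage is the position of the lion: from $L_t^{(1)}=C_0^{(1)}-(d_t/\rho_t)(C_0^{(1)}-M_t^{(1)})$ one gets $C_0^{(1)}-L_t^{(1)}\le C_0^{(1)}-M_t^{(1)}$, so the lion is at least as close to the separating line $\{x=C_0^{(1)}\}$ as the man is, and hence the lion sits near that line exactly when the man does. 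Combining this with $d_t\ge r_0=\max\{C_0^{(1)},C_0^{(2)}\}$ (the distance to the center is nondecreasing, by the lower bound just proved) should confine the single admissible crossing move to within the unit capture radius of $L_t$. Making this quantitative, by controlling the component of $M_{t+1}-L_t$ perpendicular to the line while the man crosses it, is the crux of the argument; the symmetric case $M_{t+1}^{(2)}\ge C_0^{(2)}$ is handled identically.
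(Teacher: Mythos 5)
Your overall architecture --- a simultaneous induction carrying collinearity, the ordering $d_t<\rho_t$, componentwise positivity of $C_0-M_t$, and the progress bound, with the upper bound in i) falling out of ii) plus $L_t\succeq 0$ --- is the right skeleton, and the unimodality argument for $g'(d_t)\ge0$ is a clean device. (For calibration: the paper gives no proof of this proposition; it is recalled from \cite{sgall01}, so the benchmark is Sgall's original induction, which has essentially this shape.) However, there is a genuine gap precisely at the step you yourself call ``the crux'', and as sketched that step cannot be completed, because your argument never invokes the constraint that the \emph{man} must stay in the first quadrant. Without that constraint the crossing claim is false. Concretely: take $C_0=(5,10)$, so $r_0=10$, put the man at $M_t=(4.9,0)$ and the lion on the segment $[C_0,M_t]$ at distance exactly $d_t=r_0=10$ from $C_0$, i.e.\ $L_t\approx(4.9000,\,0.0005)$; this is a legal FCLS configuration at $t=0$. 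If the man moves by $\bigl(0.15,\,-\sqrt{1-0.15^2}\bigr)$ he lands at approximately $(5.05,\,-0.989)$: he has crossed the separating line $\{x=C_0^{(1)}\}$, yet his distance from $L_t$ is about $1.0005>1$, so he is not captured. Every hypothesis your third paragraph relies on (lion at least as close to the line as the man, $d_t\ge r_0$, unit move bound, non-capture) holds here; the only violated condition is $M_{t+1}\succeq0$. In particular, ``controlling the component of $M_{t+1}-L_t$ perpendicular to the line'' cannot suffice: in this example the perpendicular component is small, and it is the component \emph{parallel} to the line that saves the man --- only the quadrant boundary $y\ge0$ forbids that move. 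Any correct completion (this is the heart of Sgall's proof, and the reason the game is winnable in a quadrant but not in the plane) must bring $M_{t+1}\succeq0$ into the capture estimate.

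There is a second, smaller gap in the lower-bound chain: you assert that non-capture ``forces $\rho_{t+1}>g(d_t)$''. By itself it does not: $\|M_{t+1}-L_t\|>1$ only says that $\rho_{t+1}$ lies outside the interval $\bigl[\,d_t\cos\theta-\sqrt{1-d_t^2\sin^2\theta},\;g(d_t)\,\bigr]$, and the near branch $\rho_{t+1}<d_t\cos\theta-\sqrt{1-d_t^2\sin^2\theta}$ must be excluded before you may chain $g(d_t)<g(\rho_t)$. This one is fixable with your own invariant: for a point $P$ at signed coordinate $u$ on the new line, $\|P-M_t\|^2-\|P-L_t\|^2=(\rho_t-d_t)(\rho_t+d_t-2u\cos\theta)$; a near-branch point has $u\le d_t$, so this quantity is at least $(\rho_t-d_t)^2>0$, meaning any such point within the man's reach satisfies $\|P-L_t\|<\|P-M_t\|\le1$ and the man would be captured there. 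Hence non-capture, the move bound and $\rho_t>d_t$ together exclude the near branch. With that lemma inserted, and with the crossing step actually proved using $M_{t+1}\succeq0$, your induction closes; as written, item ii) (and, through the chain above, item i)) rests on unproven claims.
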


\begin{proposition}\label{prop:sgall_max_move}
Let the lion play the FCLS. Then, the lion captures the man in a number of moves equal at most to
\begin{equation}\label{eq:FCLS_max_move}
N_{max}^{FCLS}\!=\!\left\lceil \|C_0\|^2-\|C_0\!-\!L_0\|^2\right\rceil\!=\!\left\lceil r_0^2+m_0^2-r_0^2\right\rceil\!=\!\left\lceil m_0^2\right\rceil
\end{equation}
\end{proposition}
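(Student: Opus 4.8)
The plan is to derive the statement as a direct consequence of Proposition \ref{prop:sgall_r1_max}, by tracking the single scalar quantity $d_t = \|L_t - C_0\|^2$, i.e., the squared distance of the lion from the fixed center. First I would fix the starting value: by condition 2 in Definition \ref{def:center0}, $\|C_0 - L_0\| = \max\{x_0, y_0\} = r_0$, so $d_0 = r_0^2$. The two chained inequalities in part i) of Proposition \ref{prop:sgall_r1_max} then say precisely that $d_t$ is strictly increasing with a per-move increment of at least one, $d_{t+1} \ge d_t + 1$, while remaining capped by $d_t \le \|C_0\|^2$.

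Next I would telescope the lower bound. As long as the man has not been caught, after $k$ lion moves one has $d_k \ge d_0 + k = r_0^2 + k$. Combining this with the cap $d_k \le \|C_0\|^2$ and the identity $\|C_0\|^2 = x_0^2 + y_0^2 = r_0^2 + m_0^2$ (which holds because $r_0$ and $m_0$ are the larger and smaller of $x_0, y_0$), I get $r_0^2 + k \le r_0^2 + m_0^2$, i.e., $k \le m_0^2$. Hence the lion cannot keep chasing indefinitely: the squared distance is squeezed between a quantity growing linearly in $k$ and a fixed ceiling, and the game must terminate. The smallest number of moves after which $r_0^2 + k$ is forced up to the ceiling $r_0^2 + m_0^2$ is $k = \lceil m_0^2 \rceil$, which yields $N_{max}^{FCLS} = \lceil m_0^2 \rceil$. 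The two intermediate equalities in \eqref{eq:FCLS_max_move} are then just the bookkeeping $\|C_0\|^2 - \|C_0 - L_0\|^2 = (r_0^2 + m_0^2) - r_0^2 = m_0^2$.

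The main obstacle is the careful handling of integrality and of the terminal (capturing) move, which is what produces the ceiling rather than a floor. Concretely, one must argue that once $d_t$ is driven up to its maximal admissible value $\|C_0\|^2$ the capture condition $\|M_{t+1} - L_t\| \le 1$ is necessarily met, so that the last move — in which the lion jumps directly onto $M_{t+1}$ and for which the increment inequality need not hold — is exactly the move that realizes the ceiling. I expect this to be the only delicate point; everything else is the monotonicity-plus-cap squeeze described above, with part ii) of Proposition \ref{prop:sgall_r1_max} serving to keep the whole trajectory on the correct side of the center so that $\|C_0\|^2$ remains the operative bound throughout the game.
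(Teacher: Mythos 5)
The paper itself offers no proof of this proposition --- it is recalled verbatim from \cite{sgall01} as one of Sgall's results --- so there is no in-paper argument to compare against. That said, your telescoping scheme is exactly the skeleton of Sgall's own proof: track $d_t=\|L_t-C_0\|^2$, note $d_0=r_0^2$ by Definition~\ref{def:center0}, use Proposition~\ref{prop:sgall_r1_max}(i) to get a gain of at least $1$ per non-capturing move under the cap $\|C_0\|^2=r_0^2+m_0^2$, and squeeze. All of that bookkeeping is correct.

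The genuine gap is in the last step, where the ceiling is produced. What the squeeze actually yields is: if the first $k$ lion moves are non-capturing, then $r_0^2+k\le d_k\le r_0^2+m_0^2$, hence $k\le m_0^2$; so there are at most $\lfloor m_0^2\rfloor$ non-capturing moves and capture occurs within $\lfloor m_0^2\rfloor+1$ moves. This equals $\lceil m_0^2\rceil$ only when $m_0^2$ is not an integer; when $m_0^2\in\nn$ it equals $m_0^2+1$, one more than claimed, and the discrepancy is load-bearing (Proposition~\ref{prop:1_move} with $m_t=1$ asserts capture in \emph{one} move, whereas your count gives two). Your sentence ``the smallest number of moves after which $r_0^2+k$ is forced up to the ceiling is $k=\lceil m_0^2\rceil$'' silently identifies $\lfloor m_0^2\rfloor+1$ with $\lceil m_0^2\rceil$, which fails precisely in the integer case. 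Moreover, the ``delicate point'' you propose to settle --- that capture is forced once $d_t$ reaches the cap --- does not close this gap: it is automatic by contraposition of Proposition~\ref{prop:sgall_r1_max}(i) (from $d_t>\|C_0\|^2-1$ a non-capturing move would overshoot the cap), and even granting it the integer case still comes out to $m_0^2+1$. What is needed instead is \emph{strictness} of the cap while the game is running, $d_t<\|C_0\|^2$. By Proposition~\ref{prop:sgall_r1_max}(ii) and the quadrant constraint, equality would force $L_t=[0,\,0]'$, and ruling out the lion ever standing at the origin does not follow from the two recalled propositions: it requires the further invariant, proved in \cite{sgall01}, that after each move the lion lies on the segment joining $C_0$ to the man's current position, so the lion can reach the corner of the quadrant only by landing on the man, i.e., at capture. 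With that strict inequality one gets $k<m_0^2$, hence at most $\lceil m_0^2\rceil-1$ non-capturing moves, and the stated bound follows in all cases; without it, your argument proves only the weaker bound $\lfloor m_0^2\rfloor+1$.
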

The bound in \eqref{eq:FCLS_max_move} has been proved to be tight whenever the lion and the man start sufficiently close to each other. In this case, the
optimal strategy for the man is to move orthogonally w.r.t. the line connecting him to $C_0$.

\section{Novel lion strategy}\label{sec:MCLS}

In this section, the proposed lion strategy, hereafter referred to as \emph{Moving Center Lion Strategy (MCLS)}, is introduced. The main difference
between MCLS and FCLN regards the computation of the center. While in FCLS the center in computed once and for all at the beginning of the game, in the
proposed strategy it is updated at each move, and then it is used to compute the lion move.

Before describing the devised lion strategy, let us introduce the following definitions which will be used throughout the paper.

\begin{definition}\label{def:center}
Let $M_t$ and $L_t$ be the man and lion position at time $t$, respectively. Let $C_t=[x_t,\,y_t]'\in\rr^2_+$ be the point satisfying
\begin{enumerate}
  \item $C_t=L_t+\eta(L_t-M_t)$, with $\eta>0$
  \item $\|C_t-L_t\|=\max\{x_t,y_t\}\ .$
\end{enumerate}
Then $C_t$ is called the \emph{center} of the MCLS at time $t$.
\end{definition}

\begin{definition}\label{def:raggi}
At a given time $t$, let us define the following quantities:
\begin{enumerate}
  \item $r_t=\max\{x_t,y_t\}=\|C_t-L_t\|$
  \item $m_t=\min\{x_t,y_t\}$
  \item $\sr_{t+1}=\|L_{t+1}-C_t\|$\ .
\end{enumerate}
\end{definition}

\medskip
\emph{Moving Center Lion Strategy}

At a given time $t$, let the man move from $M_t$ to $M_{t+1}$. The lion moves according to the following strategy:
\begin{itemize}
  \item compute the center $C_t$, based on man and lion position at time $t$, according to Definition~\ref{def:center};
  \item if $\|M_{t+1}-L_t\|\le 1$, then the lion moves to $M_{t+1}$ and catches the man;
  \item otherwise, the lion moves to a point on the line connecting $M_{t+1}$ to $C_t$ with unitary distance from $L_t$. Between the two points satisfying
  such a condition, he chooses the one farther from $C_t$.
\end{itemize}

The following propositions hold.

\begin{proposition}\label{prop:MCLS_r1_max}
Let the lion play the MCLS. At every time $t$, one has
\begin{enumerate}
  \item[i)~] $\|L_t-C_t\|^2+1 \le \|L_{t+1}-C_t\|^2 \le \|C_t\|^2$\ ;
  \item[ii)~]both elements of $C_t-L_{t+1}$ are strictly positive\ ;
  \item[iii)~]the following inequalities hold
\begin{equation}\label{eq:r1t_max}
r_t^2+1 \le \sr_{t+1}^2 \le r_t^2+m_t^2\ .
\end{equation}
\end{enumerate}
\end{proposition}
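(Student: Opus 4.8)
The plan is to establish the three claims by transferring the single-step analysis already proved for the FCLS in Proposition~\ref{prop:sgall_r1_max}. The essential observation is that the MCLS is defined exactly like the FCLS, except that the center $C_t$ is recomputed at time $t$ from the current positions $M_t,L_t$ rather than being fixed at $C_0$. But Definition~\ref{def:center} imposes on $C_t$ precisely the same two geometric conditions that Definition~\ref{def:center0} imposes on $C_0$: the center lies on the ray from $M_t$ through $L_t$ beyond $L_t$, and its distance to $L_t$ equals the larger of its two coordinates. Hence, if I freeze the configuration at time $t$ and imagine the lion playing the \emph{fixed}-center strategy with center $C_t$ for just one move, the move prescribed by the MCLS coincides with that FCLS move. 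The single-step conclusions of Proposition~\ref{prop:sgall_r1_max}, which hold for any time index, therefore apply verbatim with $C_0$ replaced by $C_t$ and evaluated at the step from $t$ to $t+1$.

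With this reduction in hand, parts i) and ii) follow immediately. For i), I would invoke Proposition~\ref{prop:sgall_r1_max}~i) for the one-step FCLS move based at $C_t$, which yields $\|L_t-C_t\|^2+1 \le \|L_{t+1}-C_t\|^2 \le \|C_t\|^2$, exactly the claimed chain. For ii), Proposition~\ref{prop:sgall_r1_max}~ii) guarantees that both components of $C_t-L_{t+1}$ are strictly positive after the FCLS-style move, which is again the statement to be proved. The only point requiring a line of justification is that the hypothesis needed to apply Sgall's proposition, namely that the man's coordinates start strictly below the center's in the appropriate sense, is inherited at time $t$; this follows because part ii) applied at the previous step keeps $C_{t-1}-L_t$ positive, and the construction of $C_t$ then places it consistently in the interior of the quadrant.

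Part iii) is then a matter of rewriting i) in the notation of Definition~\ref{def:raggi}. By definition $r_t=\|C_t-L_t\|$ and $\sr_{t+1}=\|L_{t+1}-C_t\|$, so the lower and upper bounds of i) read $r_t^2+1 \le \sr_{t+1}^2 \le \|C_t\|^2$. It remains to identify $\|C_t\|^2$ with $r_t^2+m_t^2$. Writing $C_t=[x_t,y_t]'$, one has $\|C_t\|^2=x_t^2+y_t^2=\max\{x_t,y_t\}^2+\min\{x_t,y_t\}^2=r_t^2+m_t^2$ directly from the definitions of $r_t$ and $m_t$. Substituting gives \eqref{eq:r1t_max}.

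The main obstacle, and the step deserving the most care, is justifying the reduction in the first paragraph: I must verify that the MCLS move from $L_t$ is genuinely \emph{identical} to the FCLS move one would make with the frozen center $C_t$, so that Sgall's per-step estimates transfer without modification. This amounts to checking that the two strategies share the same move rule (move to the point on the line joining $M_{t+1}$ to the center at unit distance from $L_t$, choosing the one farther from the center) and that $C_t$ satisfies the same defining properties as an FCLS center, which it does by Definition~\ref{def:center}. Once this equivalence-for-one-step is argued cleanly, the rest is bookkeeping. I would be careful to note that the recomputation of the center between steps does not invalidate anything, because Proposition~\ref{prop:sgall_r1_max} is a statement about a single transition and makes no use of the center being the same across different times.
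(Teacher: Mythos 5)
Your proposal is correct and follows essentially the same route as the paper: the paper's proof likewise observes that $C_t$ satisfies the same defining conditions as $C_0$ in Definition~\ref{def:center0}, so items i)--ii) transfer directly from Proposition~\ref{prop:sgall_r1_max}, and item iii) is obtained by rewriting i) via Definition~\ref{def:raggi} and the identity $\|C_t\|^2=r_t^2+m_t^2$. Your write-up is simply a more explicit version of the same one-step reduction argument.
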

\begin{proof}
Items $i)-ii)$ follow directly from Proposition~\ref{prop:sgall_r1_max}, because at each time $t$, the center $C_t$ is defined in the same way as $C_0$
in Definition~\ref{def:center0}; item $iii)$ stems from Definition~\ref{def:raggi} and item $i)$.
\end{proof}

\begin{proposition}\label{prop:1_move}
At a given time $t$, let $m_t\le 1$ and let the lion play the MCLS. Then, the lion captures the man in one move.
\end{proposition}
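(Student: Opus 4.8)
The plan is to argue by contradiction, exploiting the fact that any non-capturing move forces the lion into the second (chasing) branch of the MCLS, which is exactly the branch to which Proposition~\ref{prop:MCLS_r1_max} applies. First I would assume that the lion does \emph{not} catch the man at time $t$; that is, the man has an admissible move $M_{t+1}\in\rr^2_+$ with $\|M_{t+1}-M_t\|\le1$ and $\|M_{t+1}-L_t\|>1$. Under this assumption the capture test fails, so by definition the lion moves onto the line joining $M_{t+1}$ and $C_t$, and Proposition~\ref{prop:MCLS_r1_max} holds verbatim for this step.

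The core of the argument is a squeeze coming from item $iii)$ together with the standing hypothesis. Since $C_t\in\rr^2_+$ we have $0\le m_t\le1$, hence $m_t^2\le1$, and item $iii)$ gives
\[
r_t^2+1 \;\le\; \sr_{t+1}^2 \;\le\; r_t^2+m_t^2 \;\le\; r_t^2+1 .
\]
If $m_t<1$, the admissible interval $[\,r_t^2+1,\;r_t^2+m_t^2\,]$ for $\sr_{t+1}^2$ is empty, so the two bounds cannot hold simultaneously: this contradicts the existence of the assumed escaping move, and therefore the lion captures the man in one move. Note that this already settles every case with $m_t$ strictly below $1$, using nothing beyond Proposition~\ref{prop:MCLS_r1_max}.

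The boundary case $m_t=1$ is the delicate one, and the step I expect to be the main obstacle, because the squeeze now degenerates into equalities, $\sr_{t+1}^2=r_t^2+1=\|C_t\|^2$ (recall $\|C_t\|^2=x_t^2+y_t^2=r_t^2+m_t^2$), rather than into a contradiction. Saturating the lower bound of item $i)$ forces the lion's unit displacement $L_{t+1}-L_t$ to be orthogonal to the radius $C_t-L_t$, while saturating the upper bound forces $L_{t+1}$ onto the circle centred at $C_t$ through the origin, which in the geometry underlying the bound means $M_{t+1}$ has reached a coordinate axis. I would close the proof by showing that these two saturations cannot coexist for a genuinely escaping move: examining the right triangle with vertices $C_t$, $L_t$, $L_{t+1}$, together with the collinearity of $C_t$, $L_{t+1}$, $M_{t+1}$ and the non-negativity of $M_{t+1}$, I expect to obtain $\|M_{t+1}-L_t\|=1$, so that under the \emph{closed} capture rule $\|M_{t+1}-L_t\|\le1$ the move is itself a capture, contradicting $\|M_{t+1}-L_t\|>1$. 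The geometric reason the corner traps the man is precisely item $ii)$, which keeps him in the lower-left region relative to the lion, so that the quadrant boundary is what prevents $\|M_{t+1}-L_t\|$ from exceeding $1$; pinning down this equality configuration at $m_t=1$ is where the real work concentrates.
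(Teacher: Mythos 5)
Your argument for $m_t<1$ is valid and takes a genuinely different route from the paper: assuming an escaping move $\|M_{t+1}-L_t\|>1$ forces the chasing branch, and then item iii) of Proposition~\ref{prop:MCLS_r1_max} would require $r_t^2+1\le\sr_{t+1}^2\le r_t^2+m_t^2<r_t^2+1$, which is impossible; hence every admissible man move satisfies $\|M_{t+1}-L_t\|\le1$ and the lion captures. However, the statement covers $m_t\le 1$, and your treatment of the boundary case $m_t=1$ is only an announced plan, not a proof; that is a genuine gap. Moreover, the geometry you sketch for that case is off. When $m_t=1$, both inequalities in item i) must hold with equality, and equality in the upper bound $\|L_{t+1}-C_t\|^2\le\|C_t\|^2$ does not mean that $M_{t+1}$ has reached a coordinate axis: writing $L_{t+1}=[l_x,\,l_y]'$, item ii) and the quadrant constraint give $0\le l_x<x_t$ and $0\le l_y<y_t$, so $(x_t-l_x)^2+(y_t-l_y)^2=x_t^2+y_t^2$ forces $l_x=l_y=0$, i.e., the lion's new position must be \emph{exactly the origin}. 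Equality in the lower bound further forces $(L_{t+1}-L_t)\cdot(L_t-C_t)=0$, which together with $\|L_{t+1}-L_t\|=1$ gives $\|L_t\|=1$ and $L_t\cdot C_t=1$. Ruling out this configuration is the real remaining work: with $C_t=[r_t,\,1]'$ (say), the only unit vectors $L_t\in\rr^2_+$ satisfying $L_t\cdot C_t=1$ lie on a coordinate axis ($L_t=[0,\,1]'$, or also $[1,\,0]'$ when $r_t=1$), and then $M_t$, which by Definition~\ref{def:center} must lie on the ray from $C_t$ through $L_t$ beyond $L_t$, would have a negative coordinate, contradicting $M_t\in\rr^2_+$. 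None of this appears in your proposal, so as written your proof only covers $m_t<1$.

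The gap is also avoidable, because the other cited result does the whole job at once; this is exactly the paper's one-line proof. Applying Proposition~\ref{prop:sgall_max_move} to the game re-initialized at time $t$, with $C_0=C_t$ and $L_0=L_t$ (legitimate, since $C_t$ is defined exactly as an FCLS center), gives capture within $\lceil m_t^2\rceil=1$ moves whenever $0<m_t\le1$, including the boundary value $m_t=1$; and the MCLS move at time $t$ coincides with the first FCLS move for that center. Routing the argument through Proposition~\ref{prop:MCLS_r1_max} instead of Proposition~\ref{prop:sgall_max_move} is precisely what creates the degenerate equality case that your proposal leaves open.
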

\begin{proof}
The proof is a direct consequence of Proposition~\ref{prop:sgall_max_move}, with $C_0=C_t$ and $L_0=L_t$.
\end{proof}

\section{Convergence analysis}\label{sec:convergence}

In this section, an upper bound to the maximum number of moves needed by the lion to catch the man, when using the MCLS, is derived. Moreover, it is
shown that such an upper bound is always smaller than the upper bound $N_{max}^{FCLS}$ provided by the FCLS.

Before stating the main result, the following lemmas are needed.

\begin{lemma}\label{lem:x1_y1}
Let $r_t=\|C_t-L_t\|$ and $\sr_{t+1}=\|C_t-L_{t+1}\|$. Then, $x_{t+1}<x_t$ and  $y_{t+1}<y_t$.
\end{lemma}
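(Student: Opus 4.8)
The plan is to reduce this two-dimensional statement to a single scalar comparison by exploiting a collinearity that the MCLS forces upon the relevant points. First I would observe that $M_{t+1}$, $L_{t+1}$, $C_t$ and $C_{t+1}$ all lie on one line $\ell$. Indeed, by the definition of the lion's move, $L_{t+1}$ lies on the line through $M_{t+1}$ and $C_t$; and by Definition~\ref{def:center} applied at time $t+1$, the new center satisfies $C_{t+1}=L_{t+1}+\eta(L_{t+1}-M_{t+1})$ with $\eta>0$, so $C_{t+1}$ lies on the line through $M_{t+1}$ and $L_{t+1}$, which is exactly $\ell$ (assuming $L_{t+1}\neq M_{t+1}$, i.e. the man has not yet been caught). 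Hence the four points are collinear, and the two-dimensional comparison of $C_{t+1}$ with $C_t$ becomes a question of their relative positions along $\ell$.

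Next I would fix the unit vector $u=(C_t-L_{t+1})/\sr_{t+1}$ pointing from $L_{t+1}$ toward $C_t$. By item~$ii)$ of Proposition~\ref{prop:MCLS_r1_max} both components of $C_t-L_{t+1}$ are strictly positive, so $u=[u_1,u_2]'$ has $u_1,u_2>0$ and, since $u_1^2+u_2^2=1$, also $u_1,u_2<1$. Writing $L_{t+1}=[a,b]'$, I get $C_t=L_{t+1}+\sr_{t+1}\,u$, hence $x_t=a+\sr_{t+1}u_1$ and $y_t=b+\sr_{t+1}u_2$. Because $C_{t+1}\in\ell$, it can be written as $C_{t+1}=L_{t+1}+s\,u$ for some $s\in\rr$ with $|s|=\|C_{t+1}-L_{t+1}\|=r_{t+1}$, giving $x_{t+1}=a+s\,u_1$ and $y_{t+1}=b+s\,u_2$. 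Thus $x_{t+1}-x_t=(s-\sr_{t+1})u_1$ and $y_{t+1}-y_t=(s-\sr_{t+1})u_2$, and since $u_1,u_2>0$ the entire claim collapses to the single scalar inequality $s<\sr_{t+1}$.

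It remains to prove $s<\sr_{t+1}$, which I expect to be the crux. If $s\le 0$ this is immediate, as $\sr_{t+1}>0$; so suppose $s>0$, i.e. $s=r_{t+1}$. Consider $g(\lambda)=\max\{a+\lambda u_1,\,b+\lambda u_2\}-\lambda$, which is continuous and, having piecewise slope $u_1-1$ or $u_2-1$, both negative, is strictly decreasing. The center condition $\|C_{t+1}-L_{t+1}\|=\max\{x_{t+1},y_{t+1}\}$ reads $g(r_{t+1})=0$, while evaluating at $\lambda=\sr_{t+1}$ gives $g(\sr_{t+1})=\max\{x_t,y_t\}-\sr_{t+1}=r_t-\sr_{t+1}$. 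By item~$iii)$ of Proposition~\ref{prop:MCLS_r1_max} one has $\sr_{t+1}^2\ge r_t^2+1$, hence $\sr_{t+1}>r_t$ and $g(\sr_{t+1})<0$. Since $g$ is strictly decreasing and $g(r_{t+1})=0>g(\sr_{t+1})$, it follows that $r_{t+1}<\sr_{t+1}$, completing the argument. The only delicate points are justifying the collinearity cleanly and verifying that the slopes of $g$ are genuinely below one, both of which rest on Proposition~\ref{prop:MCLS_r1_max}; geometrically, the conclusion is that the new center lies strictly between $L_{t+1}$ and $C_t$, so that the center drifts toward the origin at every move.
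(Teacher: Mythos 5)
Your proof is correct and follows essentially the same route as the paper's: both exploit the collinearity of $C_t$, $L_{t+1}$, $C_{t+1}$ and the positivity of the components of $C_t-L_{t+1}$ (Proposition~\ref{prop:MCLS_r1_max}) to reduce the claim to a one-dimensional comparison along the line, and both rest on the same geometric fact that the distance from $L_{t+1}$ grows strictly faster along that line than the largest coordinate, combined with $\sr_{t+1}>r_t$ and the norm condition in Definition~\ref{def:center}. The only difference is presentational: the paper argues by contradiction (placing $C_{t+1}$ at or beyond $C_t$ forces $\|C_{t+1}-L_{t+1}\|>\max\{x_{t+1},y_{t+1}\}$), whereas you argue directly through the strictly decreasing auxiliary function $g$, which also handles the degenerate orientation $s\le 0$ explicitly.
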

\begin{proof}
According to the MCLS, $L_{t+1}$ lies on the line connecting $C_t$ and $M_{t+1}$. On the other hand, by Definition~\ref{def:center}, $C_{t+1}$ lies on
the line joining $L_{t+1}$ and $M_{t+1}$. Hence, $C_t$, $L_{t+1}$ and $C_{t+1}$ belong to the same line, i.e.
$$C_{t+1}=C_t+(C_t-L_{t+1})\alpha\triangleq C_t+\vect{d_x}{d_y}~,~\alpha,d_x,d_y\in\rr.$$
By Proposition~\ref{prop:MCLS_r1_max}, $C_t-L_{t+1}$ has both coordinates strictly positive, i.e., $sign(d_x)=sign(d_y)=sign(\alpha).$ Hence, it is
sufficient to show
that $\alpha<0$.\\
By contradiction, assume $\alpha\ge0$ and let us define $d=\sqrt{d_x^2+d_y^2}\ge0$, see Fig.~\ref{fig:C1_min_C0}. Then, being $d_x\ge 0$ and $d_y\ge 0$,
one has
\begin{eqnarray}
\|C_{t+1}-L_{t+1}\|=\sr_{t+1}+d>r_t+d\ge r_t+\max\{d_x,d_y\}\nonumber\\ %
\hspace{-15mm}\ge\max\{x_t+d_x,y_t+d_y\}=\max\{x_{t+1},y_{t+1}\}\nonumber
\end{eqnarray}
where the strict inequality comes from \eqref{eq:r1t_max}. Since $\|C_{t+1}-L_{t+1}\|>\max\{x_{t+1},y_{t+1}\}$, $C_{t+1}$ cannot be a center, according
to Definition~\ref{def:center}.
\end{proof}

\begin{figure}[htb]
\centering %
\includegraphics[width=.6\columnwidth]{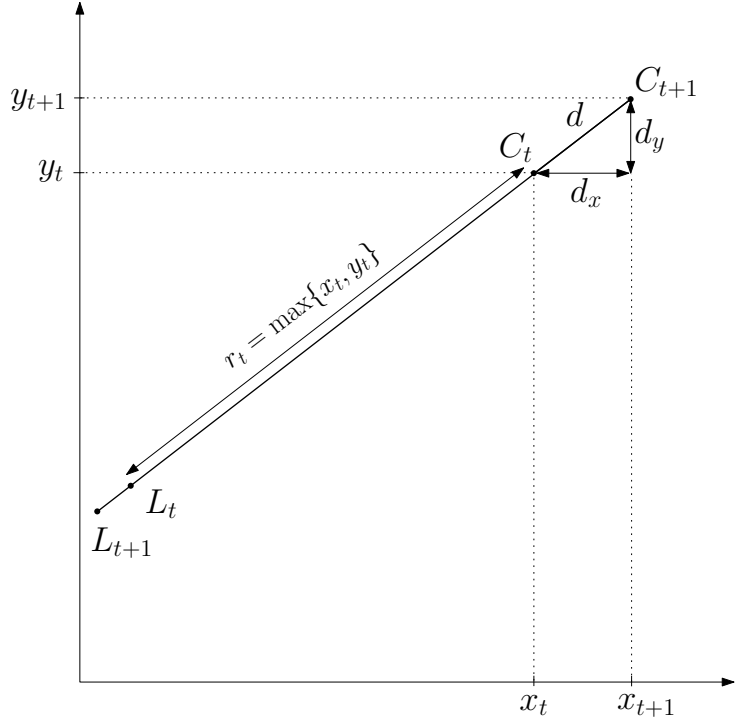}
\caption{Sketch of proof of Lemma~\ref{lem:x1_y1}.} \label{fig:C1_min_C0}
\end{figure}

\begin{lemma}\label{lem:punti_estremi}
Let $C_t$ and $\sr_{t+1}>r_t$ be given. Let $L_{t+1}$ satisfy $\|C_t-L_{t+1}\|=\sr_{t+1}$. Then,
\begin{eqnarray}
&&\hspace{-7mm}\widehat m_{t+1}\triangleq\sup_{L_{t+1}:\|C_t-L_{t+1}\|=\sr_{t+1}}~ m_{t+1}\nonumber\\
&&\hspace{-7mm}=\max\left\{ \ds m_t\frac{r_t\!-\!\sqrt{\sr_{t+1}^2-m_t^2}}{\sr_{t+1}\!-\!\sqrt{\sr_{t+1}^2-m_t^2}}\,,\,
r_t\frac{m_t\!-\!\sqrt{\sr_{t+1}^2-r_t^2}}{\sr_{t+1}\!-\!\sqrt{\sr_{t+1}^2-r_t^2}} \right\}.\nonumber\\\label{eq:widehat_m}
\end{eqnarray}
\end{lemma}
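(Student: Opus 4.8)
The plan is to set up coordinates centered at $C_t$ and parametrize the position of $L_{t+1}$ on the circle of radius $\sr_{t+1}$. Without loss of generality, assume $x_t \ge y_t$, so that $r_t = x_t$ and $m_t = y_t$; the roles of the two coordinates are symmetric and the final $\max$ in \eqref{eq:widehat_m} reflects the two cases according to which coordinate of $L_{t+1}$ turns out to be the smaller one. Writing $C_t = [x_t, y_t]'$ and $L_{t+1} = [u, v]'$, the constraint is $(x_t - u)^2 + (y_t - v)^2 = \sr_{t+1}^2$, and the objective to maximize is $m_{t+1} = \min\{u, v\}$. Since by Lemma~\ref{lem:x1_y1} we have $u < x_t$ and $v < y_t$ (both coordinates of $C_t - L_{t+1}$ positive), the feasible $L_{t+1}$ lies on the arc lying ``below-left'' of $C_t$.

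First I would observe that maximizing $\min\{u,v\}$ over a circle is a standard optimization whose maximizer occurs either where $u = v$ (the two coordinates balance) or at a boundary of the feasible arc where one of the constraints $u \le x_t$, $v \le y_t$ becomes active. The key geometric fact is that because $\sr_{t+1} > r_t = x_t \ge y_t$, the circle of radius $\sr_{t+1}$ around $C_t$ is large enough that the line $u = v$ does intersect the relevant arc, but the constrained optimum may instead be pinned at a point where $u = x_t$ (so $v = y_t - \sqrt{\sr_{t+1}^2 - m_t^2}$, giving $m_{t+1} = v$) or where $v = y_t$ (so $u = x_t - \sqrt{\sr_{t+1}^2 - r_t^2}$, giving $m_{t+1} = u$). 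I would then recognize that the two expressions inside the $\max$ in \eqref{eq:widehat_m} are exactly the values of $\min\{u,v\}$ at these two extreme configurations, rewritten by substitution.

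The cleanest route is to identify the two candidate extreme points directly. Consider the point $P_1$ on the circle with $u$-coordinate equal to $x_t$: then $v = y_t - \sqrt{\sr_{t+1}^2 - m_t^2}$, and at this point $C_t$, $P_1$ are vertically aligned so the line through them has a specific slope; expressing $\min\{u,v\}$ along the chord from this configuration and using similar triangles (the center, $L_{t+1}$, and the axis projections) yields the first term $m_t\,(r_t - \sqrt{\sr_{t+1}^2 - m_t^2})/(\sr_{t+1} - \sqrt{\sr_{t+1}^2 - m_t^2})$. Symmetrically, the point $P_2$ with $v$-coordinate equal to $y_t$ gives the second term. The essential claim is that the supremum of $m_{t+1}$ is attained at one of these two symmetric extremes rather than in the interior, which follows because along the arc $\min\{u,v\}$ is monotone on each side of the crossover $u = v$ and the crossover point, if feasible, is dominated by whichever endpoint the monotonicity pushes toward.

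The main obstacle I anticipate is justifying rigorously that the interior candidate $u = v$ never beats both boundary candidates, and correctly tracking which coordinate realizes the minimum on each portion of the arc, since this determines the branch of the $\max$. A careful case analysis based on the sign of $r_t - m_t$ and on whether the line $u = v$ meets the feasible arc is needed, together with the monotonicity of each coordinate as $L_{t+1}$ traverses the circle. Once the geometry of the feasible arc is pinned down, verifying that each boundary point yields the stated closed-form expression is a routine similar-triangles computation, which I would not grind through in detail here.
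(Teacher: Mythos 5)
There is a fundamental misreading of the objective that invalidates the whole plan. In this lemma, $m_{t+1}$ is \emph{not} $\min\{u,v\}$, the smallest coordinate of the lion position $L_{t+1}=[u,v]'$; by Definitions~\ref{def:center} and~\ref{def:raggi} it is the smallest coordinate of the \emph{next center} $C_{t+1}=[x_{t+1},\,y_{t+1}]'$, i.e.\ the point on the line through $C_t$ and $L_{t+1}$ (these are collinear with $M_{t+1}$ under the MCLS) that satisfies the self-consistency condition $\|C_{t+1}-L_{t+1}\|=\max\{x_{t+1},y_{t+1}\}$. Solving that implicit condition is exactly where the two closed-form expressions in \eqref{eq:widehat_m} come from: the paper parametrizes the feasible arc by the angle $\theta$ of $C_t-L_{t+1}$, writes $x_{t+1}=r_t-\delta\cos\theta$ and $y_{t+1}=m_t-\delta\sin\theta$ with $\delta=\sr_{t+1}-r_{t+1}$, solves for $r_{t+1}$ separately in the two cases $r_{t+1}=y_{t+1}$ and $r_{t+1}=x_{t+1}$, and shows that the resulting $\cM(\theta)$ is monotone (decreasing in one case, increasing in the other), so the supremum sits at the extreme angles. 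Since your setup never involves the center $C_{t+1}$ at all, the expressions in \eqref{eq:widehat_m} cannot be recovered from it even in principle; no ``routine similar-triangles computation'' at your candidate points produces them.

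The geometry is also wrong downstream of that error. The feasible arc is bounded by the coordinate axes, not by the lines $u=x_t$ and $v=y_t$: since $\sr_{t+1}>r_t\ge m_t$, your candidate $u=x_t$ forces $v=y_t-\sr_{t+1}<0$ and your candidate $v=y_t$ forces $u=x_t-\sr_{t+1}<0$, so both points lie outside $\rr^2_+$; moreover the coordinates you assign them do not satisfy the circle equation (with $u=x_t$ one gets $v=y_t-\sr_{t+1}$, not $y_t-\sqrt{\sr_{t+1}^2-m_t^2}$). The true extremal configurations are the two points of the circle \emph{on} the axes, $L_{t+1}=[0,\,m_t-\sqrt{\sr_{t+1}^2-r_t^2}]'$ and $L_{t+1}=[r_t-\sqrt{\sr_{t+1}^2-m_t^2},\,0]'$. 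Worse, for the objective you actually wrote down, the conclusion you assert is false: $\min\{u,v\}$ is $\le 0$ at both axis endpoints and positive at the interior balanced point $u=v$ (which is on the arc by \eqref{eq:r1t_max}), so \emph{your} optimization is maximized in the interior, the opposite of your claim. It is only for the correct objective --- the smallest coordinate of $C_{t+1}$ --- that the supremum migrates to the axis endpoints, and establishing that migration is precisely the case-analysis-plus-monotonicity content of the paper's proof that your sketch defers as routine.
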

\medskip
\begin{proof}
Let us consider the case $r_t=x_t$ (the case $r_t=y_t$ is analogous).\\
Let $C_{t+1}=[x_{t+1},\,y_{t+1}]$ be the center at time $t+1$, and $\theta$ be the angle between the $x$ axis and the vector $C_t-L_{t+1}$. Notice that,
since $C_{t+1}$ lies on the line connecting  $L_{t+1}$ and $C_t$, $\theta$ is also the angle between the $x$ axis and the vector $C_{t+1}-L_{t+1}$ (see
Fig.~\ref{fig:dim_lem1}). It follows that $\theta\in[\overline\theta,\,\underline\theta]$ where
$$
\underline\theta=\arccos\left(\frac{r_t}{\sr_{t+1}}\right)\quad \mbox{and}\quad \overline\theta=\arcsin\left(\frac{m_t}{\sr_{t+1}}\right)\ .
$$

\begin{figure}[htb]
\centering %
\includegraphics[width=.6\columnwidth]{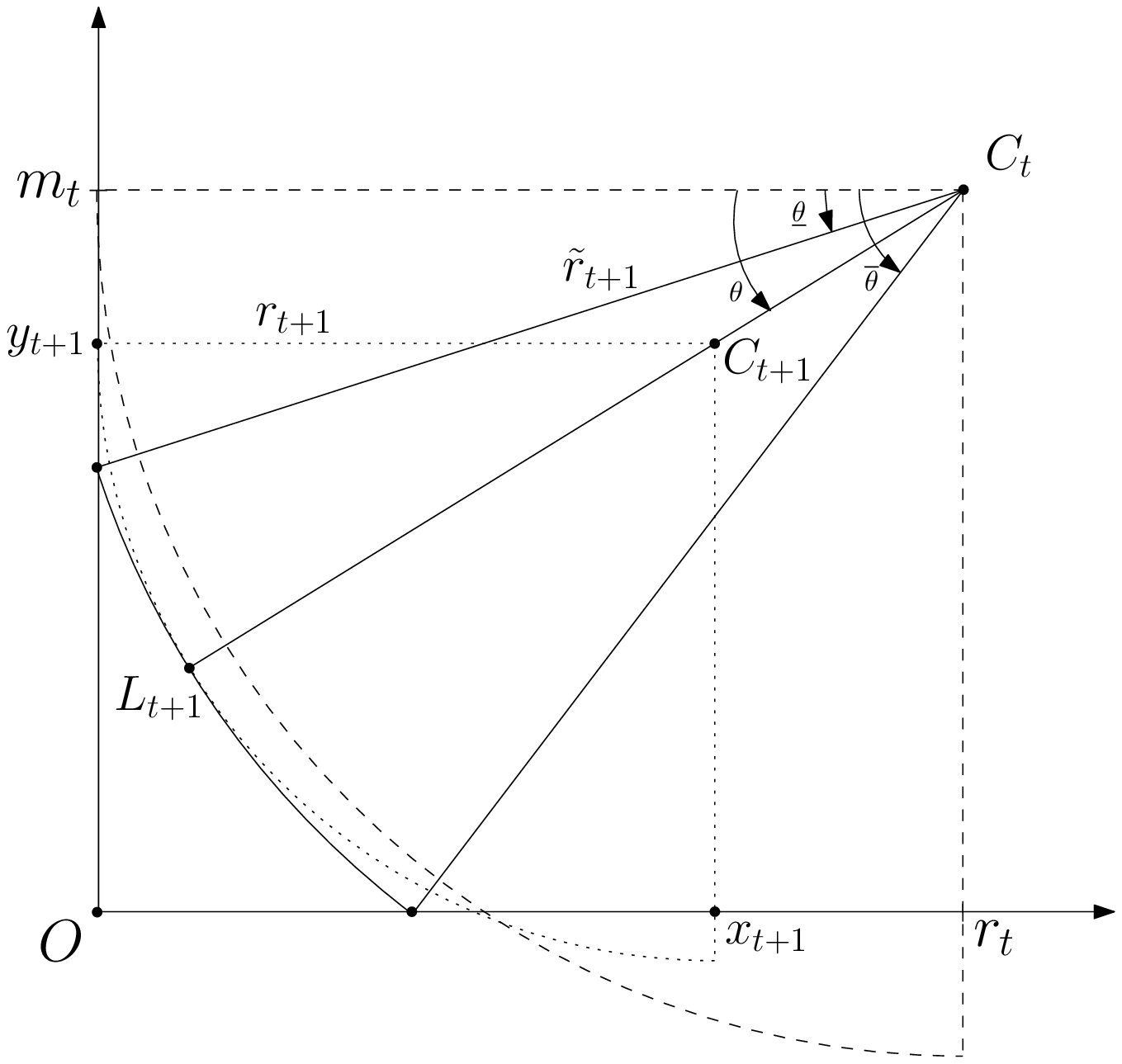}
\caption{Sketch of proof of Lemma~\ref{lem:punti_estremi}.} \label{fig:dim_lem1}
\end{figure}

Let us define $\delta=\sr_{t+1}-r_{t+1}$. By Definition~\ref{def:center} and Lemma~\ref{lem:x1_y1}, it turns out $\delta>0$. Moreover, one has
\begin{eqnarray}
x_{t+1}=&r_t-\delta\cos\theta&\triangleq f_x(\theta)\label{eq:lem_xt}\\
y_{t+1}=&m_t-\delta\sin\theta&\triangleq f_y(\theta)\label{eq:lem_yt}\ .
\end{eqnarray}
Let us define
$$
\cM(\theta)=\min\{f_x(\theta),\,f_y(\theta)\}
$$
and
$$
\cR(\theta)=\max\{f_x(\theta),\,f_y(\theta)\}\\ .
$$
Then, finding $\widehat m_{t+1}$ defined in \eqref{eq:widehat_m}, boils down to
\begin{eqnarray}
\widehat
m_{t+1}&=&\sup_{\theta\in[\overline\theta,\,\underline\theta]}\min\{x_{t+1},\,y_{t+1}\}\nonumber\\
&=&\sup_{\theta\in[\overline\theta,\,\underline\theta]}\min\{f_x(\theta),\,f_y(\theta)\} =
\sup_{\theta\in[\overline\theta,\,\underline\theta]}\cM(\theta)\ .~~~~\label{eq:min_fx_fy}
\end{eqnarray}
Let us analyze the case $\cM(\theta)=f_x(\theta)$, $\cR(\theta)=f_y(\theta)$. Notice that there exists at least one value of $\theta$ such that this
condition is satisfied. In fact, for $\theta=\underline\theta$ one has
\begin{eqnarray}
\cM(\underline\theta)&=&f_x(\underline\theta)=r_t-(\sr_{t+1}-r_{t+1})\cos(\underline\theta)\nonumber\\
&=&\frac{r_{t+1}\,r_t}{\sr_{t+1}}<r_{t+1}=f_y(\underline\theta)=\cR(\underline\theta)\ .\nonumber
\end{eqnarray}
Since $\cR(\theta)=f_y(\theta)=r_{t+1}$, by \eqref{eq:lem_yt}, one has
$$
r_{t+1}=m_t-(\sr_{t+1}-r_{t+1})\sin(\theta)
$$
which leads to
\begin{equation}\label{eq:new_radius}
r_{t+1}=\frac{m_t-\sr_{t+1}\sin(\theta)}{1-\sin(\theta)}\ .
\end{equation}
By substituting \eqref{eq:new_radius} into \eqref{eq:lem_xt}, after some algebra one gets
\begin{eqnarray}
\cM(\theta)&=&f_x(\theta)=r_t-(\sr_{t+1}-r_{t+1})\cos(\theta)\nonumber\\
&=& r_t-\frac{(\sr_{t+1}-m_t)\cos(\theta)}{1-\sin(\theta)}\ .\label{eq:new_mt1}
\end{eqnarray}
By deriving \eqref{eq:new_mt1} w.r.t. $\theta$ one obtains
$$
\frac{\partial \cM(\theta)}{\partial\theta}=\frac{m_t-\sr_{t+1}}{1-\sin(\theta)}<0
$$
since $\sr_{t+1}>r_t\ge m_t$. Because the minimum feasible value of $\theta$ is $\underline\theta$ and by \eqref{eq:new_radius}, one has, under the
hypothesis \mbox{$\cM(\theta)=f_x(\theta)$,}
\begin{eqnarray}
&&\hspace{-10mm}\sup_{\theta:\cM(\theta)=f_x(\theta)}\cM(\theta)=\cM(\underline\theta)=\frac{r_{t+1}\,r_t}{\sr_{t+1}}\nonumber\\
&&\hspace{-7mm}=\frac{r_t}{\sr_{t+1}}\frac{m_t-\sr_{t+1}\sin(\underline\theta)}{1-\sin(\underline\theta)}=
r_t\frac{m_t-\sqrt{\sr_{t+1}^2-r_t^2}}{\sr_{t+1}-\sqrt{\sr_{t+1}^2-r_t^2}}\ .\label{eq:theta_opt1}
\end{eqnarray}
Let us now repeat the same reasoning for the case in which $\cM(\theta)=f_y(\theta)$. Notice that $\theta=\overline\theta$ satisfies such a condition,
yielding
\begin{eqnarray}
\cM(\overline\theta)&=&f_y(\overline\theta)=m_t-(\sr_{t+1}-r_{t+1})\sin(\overline\theta)=\frac{r_{t+1}\,m_t}{\sr_{t+1}}\nonumber\\
&<&r_{t+1}=f_x(\overline\theta)=\cR(\overline\theta)\ .\nonumber
\end{eqnarray}
Since $\cR(\theta)=f_x(\theta)=r_{t+1}$, by \eqref{eq:lem_xt}, one has
$$
r_{t+1}=r_t-(\sr_{t+1}-r_{t+1})\cos(\theta)
$$
which leads to
\begin{equation}\label{eq:new_radius2}
r_{t+1}=\frac{r_t-\sr_{t+1}\cos(\theta)}{1-\cos(\theta)}\ .
\end{equation}
Substituting \eqref{eq:new_radius2} into \eqref{eq:lem_yt}, after some algebra one gets
\begin{eqnarray}
\cM(\theta)&=&f_y(\theta)=r_t-(\sr_{t+1}-r_{t+1})\sin(\theta)\nonumber\\
&=&m_t-\frac{(\sr_{t+1}-r_t)\sin(\theta)}{1-\cos(\theta)}\ .\label{eq:new_mt2}
\end{eqnarray}
By deriving \eqref{eq:new_mt2} w.r.t. $\theta$ one obtains
$$
\frac{\partial \cM(\theta)}{\partial\theta}=\frac{\sr_{t+1}-r_t}{1-\cos(\theta)}>0\ .
$$
Thus,
\begin{eqnarray}
\sup_{\theta:\cM(\theta)=f_y(\theta)}\cM(\theta)&=&\cM(\overline\theta)=\frac{r_{t+1}\,m_t}{\sr_{t+1}}\nonumber\\
&=& \frac{m_t}{\sr_{t+1}}\frac{r_t-\sr_{t+1}\cos(\overline\theta)}{1-\cos(\overline\theta)}\nonumber\\
&=& m_t\frac{r_t-\sqrt{\sr_{t+1}^2-m_t^2}}{\sr_{t+1}-\sqrt{\sr_{t+1}^2-m_t^2}}\ .~~~~~~\label{eq:theta_opt2}
\end{eqnarray}
The result follows directly by \eqref{eq:theta_opt1} and \eqref{eq:theta_opt2}.
\end{proof}

In order to show that the MCLS leads to capture of the man in a finite number of moves, it is sufficient to prove that the strategy leads to $m_t\le 1$
for some finite $t$ (recall Proposition~\ref{prop:1_move}). In this respect, the worst situation for the lion is the one in which $m_t$ is maximized.
Lemma~\ref{lem:punti_estremi} states that for given $C_t$ and $\sr_{t+1}$, the lion location $L_{t+1}$ which maximizes the smallest element of $C_{t+1}$,
is one of the two points on the coordinate axes with distance $\sr_{t+1}$ from $C_t$. These points correspond to the extreme angles $\underline\theta$
and $\overline\theta$ for the direction of $C_t-L_{t+1}$ in Fig.~\ref{fig:dim_lem1}.

\begin{lemma}\label{lem:centro_su_bisettrice}
At a given time $t$, let $\widehat m_{t+1}$ be defined as in \eqref{eq:widehat_m}. Then,
$$
r_t^*\triangleq\arg \sup_{r_t\ge m_t} \widehat m_{t+1}=m_t
$$
and
\begin{equation}
m^*_{t+1}\triangleq\sup_{r_t\ge m_t}\widehat m_{t+1}=m_t\frac{m_t-\sqrt{\sr_{t+1}^2-m_t^2}}{\sr_{t+1}-\sqrt{\sr_{t+1}^2-m_t^2}}\label{eq:lem2}
\end{equation}
\end{lemma}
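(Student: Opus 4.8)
The plan is to read the right-hand side of \eqref{eq:widehat_m} as a pointwise maximum of two one-variable functions of $r_t$,
$$
A(r_t)=m_t\frac{r_t-\sqrt{\sr_{t+1}^2-m_t^2}}{\sr_{t+1}-\sqrt{\sr_{t+1}^2-m_t^2}},\quad B(r_t)=r_t\frac{m_t-\sqrt{\sr_{t+1}^2-r_t^2}}{\sr_{t+1}-\sqrt{\sr_{t+1}^2-r_t^2}},
$$
so that $\widehat m_{t+1}=\max\{A(r_t),B(r_t)\}$ with $\sr_{t+1}$ and $m_t$ held fixed. A useful preliminary observation is that $A$ and $B$ are exchanged by the formal substitution $r_t\leftrightarrow m_t$; in particular the two branches coincide at $r_t=m_t$, where their common value is exactly the right-hand side of \eqref{eq:lem2}. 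The lemma then amounts to showing that this common value is the extreme value of $\max\{A,B\}$ over $r_t\ge m_t$, attained at the left endpoint.

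First I would pin down the feasible range and the signs. Proposition~\ref{prop:MCLS_r1_max}, through \eqref{eq:r1t_max}, gives $\sr_{t+1}^2-m_t^2\le r_t^2\le \sr_{t+1}^2-1$, which keeps both square roots real and, more importantly, yields $r_t\ge\sqrt{\sr_{t+1}^2-m_t^2}$ and $m_t\ge\sqrt{\sr_{t+1}^2-r_t^2}$. These two inequalities make the numerators of $A$ and $B$ nonnegative, while $\sr_{t+1}>r_t\ge m_t>0$ makes the denominators positive; this is the bookkeeping that renders the forthcoming sign analysis unambiguous.

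The core step is a monotonicity study of each branch in $r_t$, in the same spirit as the $\theta$-derivative computation already carried out in the proof of Lemma~\ref{lem:punti_estremi}. The branch $A$ is affine in $r_t$, so its direction of variation is read off at once from the constant slope $m_t/(\sr_{t+1}-\sqrt{\sr_{t+1}^2-m_t^2})$. For $B$ I would differentiate the product using $\tfrac{d}{dr_t}\sqrt{\sr_{t+1}^2-r_t^2}=-r_t/\sqrt{\sr_{t+1}^2-r_t^2}$ and then factor, so that the upper bound $\sr_{t+1}^2\le r_t^2+m_t^2$ from \eqref{eq:r1t_max} fixes the sign of the factor $m_t-\sqrt{\sr_{t+1}^2-r_t^2}$ and hence of $B'(r_t)$. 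With the sign of each derivative made definite by \eqref{eq:r1t_max}, and recalling that $A(m_t)=B(m_t)$, the resulting monotonicity forces the pointwise maximum to attain its extreme value at the endpoint $r_t=m_t$, which gives $r_t^*=m_t$ and the expression \eqref{eq:lem2}.

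The step I expect to be the real obstacle is the $B$-branch derivative: unlike $A$ it is genuinely nonlinear, and its sign is invisible before the factorization, so the argument stands or falls on invoking $\sr_{t+1}^2\le r_t^2+m_t^2$ at exactly the right place. A secondary subtlety is the switching of the two arguments of the maximum along the interval; I would dispose of it by comparing the one-sided behaviour of $A$ and $B$ at $r_t=m_t$ to identify which branch is locally active, and then checking that the other branch never overtakes it on the feasible range, so that the single-branch endpoint analysis indeed controls $\max\{A,B\}$.
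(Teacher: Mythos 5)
Your outline hinges on reading the supremum in Lemma~\ref{lem:centro_su_bisettrice} as being taken with $\sr_{t+1}$ \emph{held fixed} while $r_t$ varies, and under that reading the monotonicity comes out exactly backwards. The slope of your branch $A$, namely $m_t/(\sr_{t+1}-\sqrt{\sr_{t+1}^2-m_t^2})$, is \emph{positive}, so $A$ is increasing in $r_t$; and carrying out the differentiation of $B$ that you sketch gives, with $q=\sqrt{\sr_{t+1}^2-r_t^2}$,
$$
B'(r_t)=\frac{(m_t-q)(\sr_{t+1}-q)+\frac{r_t^2}{q}\,(\sr_{t+1}-m_t)}{(\sr_{t+1}-q)^2}>0\ ,
$$
where the inequality $m_t\ge q$ that you invoke from \eqref{eq:r1t_max} makes the first term nonnegative and the second term is strictly positive since $\sr_{t+1}>r_t\ge m_t$. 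So \emph{both} branches increase with $r_t$: the common value at $r_t=m_t$ is where $\max\{A,B\}$ is smallest, and its supremum over your feasible interval sits at the right endpoint $r_t=\sqrt{\sr_{t+1}^2-1}$, not at $r_t=m_t$. Indeed, with $\sr_{t+1}$ held fixed the lemma's conclusion is simply false: take $m_t=2$, $\sr_{t+1}=2\sqrt2$; then at $r_t=m_t=2$ both branches (and the right-hand side of \eqref{eq:lem2}) equal $0$, while the point $r_t=\sqrt7$ is feasible ($r_t^2+1=8=\sr_{t+1}^2\le r_t^2+m_t^2=11$) and gives $A=2(\sqrt7-2)/(2\sqrt2-2)\approx1.56$. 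No amount of sign bookkeeping can complete your final step, because the statement you are trying to prove does not hold for the slicing you chose.

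The paper's proof takes a different slice of the optimization, and this is the crux rather than a cosmetic choice. It sets $r_t=\beta m_t$ with $\beta\ge1$ and $\sr_{t+1}=\gamma r_t$ with the \emph{ratio} $\gamma>1$ held fixed, so that $\sr_{t+1}$ scales up together with $r_t$ --- which is the coupling enforced by \eqref{eq:r1t_max}, since $\sr_{t+1}^2\ge r_t^2+1$ forbids $\sr_{t+1}$ from staying put as $r_t$ grows. In these variables the two branches become $m_t\bigl(\beta-\sqrt{\beta^2\gamma^2-1}\bigr)/\bigl(\beta\gamma-\sqrt{\beta^2\gamma^2-1}\bigr)$ and $m_t\bigl(1-\beta\sqrt{\gamma^2-1}\bigr)/\bigl(\gamma-\sqrt{\gamma^2-1}\bigr)$, whose $\beta$-derivatives are
$$
\frac{m_t(1-\gamma)}{\sqrt{\beta^2\gamma^2-1}\left(\beta\gamma-\sqrt{\beta^2\gamma^2-1}\right)^2}<0
\qquad\mbox{and}\qquad
-\,\frac{m_t\sqrt{\gamma^2-1}}{\gamma-\sqrt{\gamma^2-1}}<0\ :
$$
the penalty from the forced growth of $\sr_{t+1}$ dominates the benefit of a larger $r_t$, both branches decrease, and the supremum is at $\beta^*=1$, i.e.\ $r_t^*=m_t$, with \eqref{eq:lem2} following by substitution. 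Your fixed-$\sr_{t+1}$ slicing severs precisely this coupling, which is why it points to the opposite endpoint; to salvage your approach you would have to restate the lemma for that slicing (with the inner supremum at $r_t=\sqrt{\sr_{t+1}^2-1}$) and redo the outer optimization in Theorem~\ref{th:convergence}, which is no longer the paper's argument.
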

\begin{proof}
Recalling \eqref{eq:r1t_max}, let $r_t\triangleq\beta m_t$ with $\beta\ge1$. We want to find
$$
\beta^*=\arg~\sup_{\beta\ge1}~\widehat m_{t+1}\ .
$$
Let $\sr_{t+1}\triangleq \gamma r_t$, $\gamma>1$ and define $p=\sqrt{\sr_{t+1}^2-m_t^2}=m_t\sqrt{\beta^2\gamma^2-1}$, and
$q=\sqrt{\sr_{t+1}^2-r_t^2}=\beta m_t\sqrt{\gamma^2-1}$. For given $r_t$, $m_t$ and $\sr_{t+1}$, Lemma~\ref{lem:punti_estremi} states that
\begin{eqnarray}
\widehat m_{t+1}\!\!\!\!\!&=& \sup_{L_{t+1}:\|C_t-L_{t+1}\|=\sr_{t+1}}~ m_{t+1}\nonumber\\
&=&\!\!\!\!\max\left\{m_t\frac{\beta-\sqrt{\beta^2\gamma^2-1}}{\beta\gamma-\sqrt{\beta^2\gamma^2-1}} ,\,
m_t\frac{1-\beta\sqrt{\gamma^2-1}}{\gamma-\sqrt{\gamma^2-1}}\right\}.\nonumber\\
\end{eqnarray}
Let us consider the case
\begin{equation}\label{eq:lem_pr_csb}
\widehat m_{t+1} = m_t\frac{\beta-\sqrt{\beta^2\gamma^2-1}}{\beta\gamma-\sqrt{\beta^2\gamma^2-1}}\ .
\end{equation}
By deriving \eqref{eq:lem_pr_csb} w.r.t. $\beta$ one obtains
\begin{eqnarray}
\frac{\partial \widehat m_{t+1}}{\partial
\beta}&=&m_t\frac{\left(1-\frac{\beta\gamma^2}{\sqrt{\beta^2\gamma^2-1}}\right)\left(\beta\gamma-\sqrt{\beta^2\gamma^2-1}\right)}{\left(\beta\gamma-\sqrt{\beta^2\gamma^2-1}\right)^2}\nonumber\\
&&-m_t\frac{\left(\beta-\sqrt{\beta^2\gamma^2-1}\right)\left(\gamma-\frac{\beta\gamma^2}{\sqrt{\beta^2\gamma^2-1}}\right)}{\left(\beta\gamma-\sqrt{\beta^2\gamma^2-1}\right)^2}\nonumber\\
&=&m_t \frac{(\gamma-1)\left(
\sqrt{\beta^2\gamma^2-1}-\frac{\beta^2\gamma^2}{\sqrt{\beta^2\gamma^2-1}}\right)}{\left(\beta\gamma-\sqrt{\beta^2\gamma^2-1}\right)^2}\nonumber\\
&=& \frac{m_t(1-\gamma)}{\sqrt{\beta^2\gamma^2-1}\left(\beta\gamma-\sqrt{\beta^2\gamma^2-1}\right)^2}<0\ .\nonumber
\end{eqnarray}

Since $\ds\frac{\partial \widehat m_{t+1}}{\partial \beta}<0$, $\beta^*$ corresponds to its minimum feasible value, i.e., $\beta^*=1$, leading to
$r_t^*=m_t$.

By following the same reasoning, for the case in which
\begin{equation}\label{eq:lem_pr_csb2}
\widehat m_{t+1} = m_t\frac{1-\beta\sqrt{\gamma^2-1}}{\gamma-\sqrt{\gamma^2-1}}\ .
\end{equation}
one gets again $\ds\frac{\partial \widehat m_{t+1}}{\partial \beta}<0$, and then $r_t^*=m_t$. Expression \eqref{eq:lem2} is obtained by direct
substitution into \eqref{eq:widehat_m}.

\end{proof}

Lemma~\ref{lem:centro_su_bisettrice} states that, for a given $m_t$, the center $C_t$ which (potentially) leads to the maximum $m_{t+1}$ at the
subsequent step is $C_t=[m_t,m_t]'$. This is instrumental to define a bound to the evolution of $m_t$.

\begin{theorem}\label{th:convergence}
Let $m_t>1$ and let the lion play the MCLS. Then, for any possible man strategy, one has
\begin{equation}\nonumber
\ds m_{t+1}\le\frac{m_t(m_t-1)}{\sqrt{1+m_t^2}-1}\ .  
\end{equation}
\end{theorem}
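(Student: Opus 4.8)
The plan is to turn the claimed inequality into a one-dimensional maximization over the radius $\sr_{t+1}$, using the two preceding lemmas to eliminate all other degrees of freedom available to the man. First I would appeal to Lemma~\ref{lem:centro_su_bisettrice}: for fixed $m_t$ and $\sr_{t+1}$, the largest value of the smallest coordinate of the next center is $m^*_{t+1}$ in \eqref{eq:lem2}, attained when the center sits on the bisector, $r_t=m_t$. Since \eqref{eq:lem2} already incorporates the worst direction of the man's move (Lemma~\ref{lem:punti_estremi}) and the worst center aspect (Lemma~\ref{lem:centro_su_bisettrice}), for any man strategy one has
$$m_{t+1}\le g(\sr_{t+1})\triangleq m_t\frac{m_t-\sqrt{\sr_{t+1}^2-m_t^2}}{\sr_{t+1}-\sqrt{\sr_{t+1}^2-m_t^2}}\ ,$$
and the only quantity still to be controlled is $\sr_{t+1}$.

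Next I would fix the feasible range of $\sr_{t+1}$. Setting $r_t=m_t$ in Proposition~\ref{prop:MCLS_r1_max}$(iii)$ gives $m_t^2+1\le\sr_{t+1}^2\le 2m_t^2$; in particular $\sr_{t+1}>m_t=r_t$, so the hypothesis $\sr_{t+1}>r_t$ of Lemma~\ref{lem:punti_estremi} invoked above is met and $g$ is well defined on the interval. It then suffices to maximize $g$ over $\sr_{t+1}\in[\sqrt{m_t^2+1},\,\sqrt{2}\,m_t]$.

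The crux is to show that $g$ is strictly decreasing in $\sr_{t+1}$ on this interval. Writing $u=\sqrt{\sr_{t+1}^2-m_t^2}$ (so that $u>0$ and $\sr_{t+1}>u$) and differentiating the ratio, a short computation gives
$$\frac{\partial g}{\partial\sr_{t+1}}=\frac{m_t\,(m_t-\sr_{t+1}-u)}{u\,(\sr_{t+1}-u)}\ ;$$
since $\sr_{t+1}>m_t$ and $u>0$, the numerator factor $m_t-\sr_{t+1}-u$ is strictly negative while the denominator is positive, so $g'(\sr_{t+1})<0$ throughout. The maximum therefore occurs at the left endpoint $\sr_{t+1}=\sqrt{m_t^2+1}$, where $\sqrt{\sr_{t+1}^2-m_t^2}=1$, yielding
$$m_{t+1}\le g\!\left(\sqrt{m_t^2+1}\right)=m_t\frac{m_t-1}{\sqrt{1+m_t^2}-1}=\frac{m_t(m_t-1)}{\sqrt{1+m_t^2}-1}\ ,$$
as claimed; the hypothesis $m_t>1$ guarantees that the numerator $m_t-1$ is positive, so the bound is meaningful. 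I expect the main obstacle to be precisely this monotonicity step: carrying out the derivative of the rational-plus-radical expression cleanly and confirming that the decisive factor $m_t-\sr_{t+1}-u$ keeps a negative sign over the entire admissible range. Once that is settled, the evaluation at the endpoint is immediate.
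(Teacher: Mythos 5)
Your proposal is correct and takes essentially the same route as the paper's proof: use Lemmas~\ref{lem:punti_estremi} and \ref{lem:centro_su_bisettrice} to reduce to the worst case $r_t=m_t$, observe that the resulting function of $\sr_{t+1}$ is strictly decreasing, and evaluate it at the smallest feasible value $\sr_{t+1}=\sqrt{m_t^2+1}$ given by \eqref{eq:r1t_max}. The only differences are cosmetic: the paper first normalizes via $\hat r=\sr_{t+1}/m_t$ before differentiating (and its printed derivative omits a factor $1/\sqrt{\hat r^2-1}$, which does not affect the sign argument), whereas your derivative expression in terms of $u=\sqrt{\sr_{t+1}^2-m_t^2}$ is exact, and your upper limit $\sqrt{2}\,m_t$ on $\sr_{t+1}$ is harmless but not needed since only monotonicity and the left endpoint matter.
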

\begin{proof}
By Lemma~\ref{lem:punti_estremi} and Lemma~\ref{lem:centro_su_bisettrice}, one has
\begin{eqnarray}
m_{t+1}^*&=& \sup_{r_t\ge m_t}~\sup_{L_{t+1}:\|C_t-L_{t+1}\|=\sr_{t+1}} m_{t+1}\nonumber\\
&=&                       %
\sup_{r_t\ge m_t} \widehat m_{t+1}%
=m_t\frac{m_t-\sqrt{\sr_{t+1}^2-m_t^2}}{\sr_{t+1}-\sqrt{\sr_{t+1}^2-m_t^2}}\ .~~~~~~\label{eq:lem_conv}
\end{eqnarray}
Let us define $\hat r=\sr_{t+1}/m_t$. By substituting into \eqref{eq:lem_conv}, one has
\begin{equation}\label{eq:lem_conv2}
m_{t+1}^*=m_t\frac{1-\sqrt{\hat{r}^2-1}}{\hat{r}-\sqrt{\hat{r}^2-1}}\ .
\end{equation}
By deriving $m_{t+1}^*$ w.r.t. $\hat{r}$, one has
$$
\frac{\partial m_{t+1}^*}{\partial \hat{r}}=m_t\frac{\hat{r}-1-\sqrt{\hat{r}^2-1}}{\left(\hat{r}-\sqrt{\hat{r}^2-1} \right)^2}
$$
which vanishes for $\hat r=1$. It can be easily checked that $\hat{r}=1$ corresponds to a maximum and $\frac{\partial m_{t+1}^*}{\partial \hat{r}}<0$,
$\forall \hat r>1$, i.e. $\forall \sr_{t+1}>m_t$. Since by \eqref{eq:r1t_max} one has that $\sr_{t+1}^2\ge r_t^2+1= m_t^2+1$, the maximum value for
$m_{t+1}^*$ is achieved for $\sr_{t+1}=\sqrt{m_t^2+1}$. By substituting in \eqref{eq:lem_conv2} one has
\begin{eqnarray}
m_{t+1}^*&=&m_t\frac{m_t-\sqrt{m_t^2+1-m_t^2}}{\sqrt{m_t^2+1}-\sqrt{m_t^2+1-m_t^2}}\nonumber\\
&=&m_t\frac{m_t-1}{\sqrt{m_t^2+1}-1}\nonumber
\end{eqnarray}
which concludes the proof.
\end{proof}

A direct consequence of Theorem~\ref{th:convergence} is that MCLS leads to capture of the man in a finite number of moves. An upper bound to such a
number is now derived.

Let us consider the recursion
\begin{equation}\label{eq:mtpiu1bis}
b_{t+1}=\frac{b_t(b_t-1)}{\sqrt{1+b_t^2}-1}\triangleq g(b_t)\ .
\end{equation}
Let us fix $b_0=m_0>1$. Since the function $g(b_t)$ is monotone increasing for $b_t>1$, by Theorem~\ref{th:convergence} one has that, if $m_t\le b_t$,
then
$$
m_{t+1}\le g(m_t)\le g(b_t)=b_{t+1}\ .
$$
Therefore, recursion \eqref{eq:mtpiu1bis} returns an upper bound of $m_{t}$, for all $t$. By Proposition~\ref{prop:1_move}, if $m_t\le1$ then the game
ends at the next move. Since $g(b_t)<0$ when $b_t<1$, an upper bound to the maximum number of moves before the game ends can be computed as follows
$$
N_{max}^{MCLS}=\min\{t\in\nn:~b_t <0\}\ .
$$
Notice that $N_{max}^{MCLS}$ is a function of $m_0$, although it seems difficult to express this dependence explicitly. Clearly, $N_{max}^{MCLS}$ can be
numerically computed by recursively evaluating $b_{t}$ in \eqref{eq:mtpiu1bis}. In Fig.~\ref{fig:upper_bounds}, $N_{max}^{MCLS}$ and
$N_{max}^{FCLS}=\lceil m_0^2\rceil$ are compared for $m_0\in [1,\,10]$.

\begin{figure}[htb]
\centering %
\psfrag{m0}[c]{$m_0$}%
\psfrag{yy}[]{\scriptsize{Upper bound on the number of moves}}%
\includegraphics[width=.6\columnwidth]{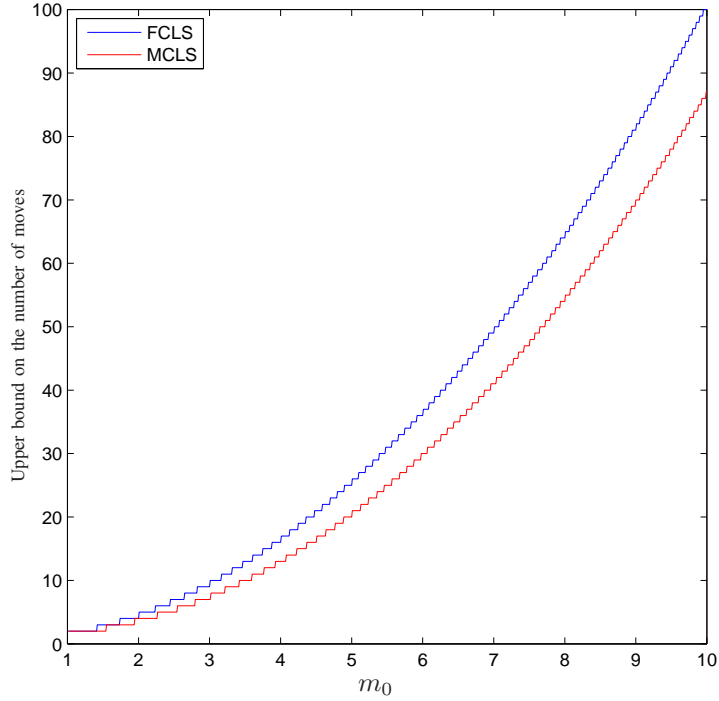}
\caption{Upper bounds on the number of moves for different values of $m_0$. Comparison between $N_{max}^{FCLS}$ (blue) and M$N_{max}^{MCLS}$CLS (red).}
\label{fig:upper_bounds}
\end{figure}

From Fig.~\ref{fig:upper_bounds} it is apparent that $N_{max}^{MCLS}\le N_{max}^{FCLS}$, where equality holds only for small values of $m_0$ (due to the
discretization introduced by the fact that the number of moves must be integer). This fact is proved in the following theorem for every $m_0>1$.

\begin{theorem}
Let $m_0>1$. Then, $N_{max}^{MCLS}\le N_{max}^{FCLS}=\lceil m_0^2\rceil$.
\end{theorem}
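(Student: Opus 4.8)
The plan is to reduce everything to the scalar recursion \eqref{eq:mtpiu1bis}, $b_{t+1}=g(b_t)$ with $b_0=m_0>1$, and to show that $g$ shrinks the \emph{square} of its argument by at least one unit at every step for which $b_t>1$. The crux is therefore the inequality
\begin{equation}\nonumber
g(b)^2 < b^2-1 \qquad \text{for all } b>1\ .
\end{equation}
To prove it I would first rationalize $g$: multiplying numerator and denominator in \eqref{eq:mtpiu1bis} by $\sqrt{1+b^2}+1$ yields the equivalent closed form $g(b)=\frac{(b-1)(\sqrt{1+b^2}+1)}{b}$. Substituting this, squaring, and cancelling the positive factor $(b-1)$, the claimed bound reduces to $\sqrt{1+b^2}\,(b-1)\le b^2-b+1$. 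Both sides are positive for $b>1$, so squaring once more is legitimate and the inequality becomes the polynomial identity $(b^2-b+1)^2-(1+b^2)(b-1)^2=b^2>0$, which establishes the strict decrease.

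With this decrease lemma in hand the counting is elementary. From the closed form one reads off that $g(b)>0$ for $b>1$, that $g(1)=0$, and that $g(b)<0$ for $0\le b<1$; hence the sequence remains strictly above $1$ until it first enters the interval $(0,1]$, after which it cannot climb back. Let $\tau=\min\{t\in\nn:\ b_t\le 1\}$. For every $t<\tau$ one has $b_t>1$, so telescoping the decrease lemma gives $b_t^2<m_0^2-t$. Evaluating at $t=\tau-1$ and using $b_{\tau-1}^2>1$ (together with $m_0>1$ to cover the case $\tau=1$) yields $\tau<m_0^2$, and since $\tau$ is an integer and $m_0^2\le\lceil m_0^2\rceil$, we obtain $\tau\le\lceil m_0^2\rceil-1$.

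It then remains to convert $\tau$ into $N_{max}^{MCLS}=\min\{t\in\nn:\ b_t<0\}$. Because $b_{\tau-1}>1$ we have $b_\tau=g(b_{\tau-1})>0$, so $0<b_\tau\le 1$. If $b_\tau<1$, then $b_{\tau+1}=g(b_\tau)<0$, whence $N_{max}^{MCLS}=\tau+1\le\lceil m_0^2\rceil$. In the degenerate boundary case $b_\tau=1$ one has $b_{\tau+1}=g(1)=0$ and $N_{max}^{MCLS}=\tau+2$; here the \emph{strictness} of the decrease is exactly what saves the bound, since $b_\tau^2=1<m_0^2-\tau$ forces $\tau<m_0^2-1$, hence $\tau\le\lceil m_0^2\rceil-2$ and again $N_{max}^{MCLS}=\tau+2\le\lceil m_0^2\rceil$. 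In both cases $N_{max}^{MCLS}\le\lceil m_0^2\rceil=N_{max}^{FCLS}$.

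The step I expect to be the main obstacle is the algebraic verification of the decrease lemma $g(b)^2<b^2-1$: the rationalization of $g$ and the resulting polynomial identity must be carried out cleanly, and it is the strictness obtained there that is needed in the integer/ceiling bookkeeping to absorb the extra move in the boundary situation $b_\tau=1$.
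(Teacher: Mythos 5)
Your proposal is correct and follows essentially the same route as the paper: both reduce the theorem to the per-step inequality $b_{t+1}^2 < b_t^2 - 1$ for $b_t>1$ and then compare recursion \eqref{eq:mtpiu1bis} against the FCLS recursion $m_{t+1}^2=m_t^2-1$. Your rationalization and the identity $(b^2-b+1)^2-(1+b^2)(b-1)^2=b^2$ simply supply the algebra that the paper compresses into ``standard calculus arguments,'' and your explicit ceiling bookkeeping (including the degenerate case $b_\tau=1$) fills in the counting step the paper leaves implicit.
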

\begin{proof}
System \eqref{eq:mtpiu1bis} can be rewritten as
$$
b_{t+1} = \frac{(b_t-1)(\sqrt{1+b_t^2}+1)}{b_t}
$$
which leads to
\begin{eqnarray}
b^2_{t+1}\!\!\!\! &=&\!\!\! \ds{\frac{(b_t-1)^2(b_t^2+2+2\sqrt{1+b_t^2})}{b^2_t} }\nonumber\\
&=&\!\!\!\ds{b^2_t- \frac{2b^3_t-3b_t^2+4b_t-2-2(b_t-1)^2\sqrt{1+b_t^2}}{b^2_t}}\ .\nonumber\\\label{eq:b2}
\end{eqnarray}

For a given $m_0$, let us consider the system
\begin{equation}\label{eq:m_0sgall}
m_{t+1}^2=m_t^2-1\ .
\end{equation}
Clearly, $\min\{t\in\nn:\,m_t<0\}=\lceil m_0^2\rceil=N_{max}^{FCLS}$. Therefore, to prove the theorem it is sufficient to show that system \eqref{eq:b2}
decays to zero always faster than \eqref{eq:m_0sgall}, which amounts to show that
$$
\frac{2b^3_t-3b_t^2+4b_t-2-2(b_t-1)^2\sqrt{1+b_t^2}}{b^2_t} > 1
$$
for all $b_t>1$. This easily follows from standard calculus arguments.
\end{proof}

\section{Conclusions}\label{sec:conclusion}

A new lion strategy has been devised for the discrete-time version of the lion and man problem. This solution dominates the one proposed by Sgall in
\cite{sgall01} in terms of maximum number of moves required to guarantee man capture.

An interesting feature of the proposed approach is that the upper bound on the number of moves does not seem to be tight. Indeed, for randomly chosen
initial conditions of lion and man, numerical simulations show that the actual number of steps in which the lion reaches the man turns out to be much
smaller than that predicted by the bound. Unfortunately, the optimal man strategy for counteracting the proposed lion algorithm, for generic initial
conditions, is still an open problem. It is expected that such a result would allow one to significantly improve the upper bound on the number of moves.

We believe that the proposed result is helpful in all the contexts in which the lion and man problem solution is used as a building block within more
complex strategies for pursuit-evasion games, like in \cite{isler2005}. The application of the new lion algorithm in these problems and the evaluation of
its benefits is the subject of ongoing research.

\balance

\bibliographystyle{IEEEtran}      
\bibliography{lion_and_man}

\end{document}